\title{Treewidth is Polynomial in Maximum Degree on Weakly Sparse Graphs Excluding a Planar Induced Minor}
\titlerunning{Treewidth is in $\Delta^{O(1)}$ on Weakly Sparse Graphs Excluding a Planar Induced Minor}%optional, please use if title is longer than one line
\author{\'{E}douard Bonnet}{Univ Lyon, CNRS, ENS de Lyon, Université Claude Bernard Lyon 1, LIP UMR5668, France \and \url{http://perso.ens-lyon.fr/edouard.bonnet/}}{edouard.bonnet@ens-lyon.fr}{https://orcid.org/0000-0002-1653-5822}{was supported by the ANR projects TWIN-WIDTH (ANR-21-CE48-0014) and Digraphs (ANR-19-CE48-0013).}
\author{Jędrzej Hodor}{Theoretical Computer Science Department, Faculty of Mathematics and Computer Science and Doctoral School of Exact and Natural Sciences, Jagiellonian University, Kraków, Poland  \and \url{https://sites.google.com/view/jedrzej-hodor}}{jedrzej.hodor@gmail.com}{https://orcid.org/0000-0002-2564-7121}{was partially supported by a Polish Ministry of Education and Science grant (Perły Nauki; PN/01/0265/2022).}
\author{Tuukka Korhonen}{Department of Informatics, University of Bergen, Norway \and \url{https://tuukkakorhonen.com/}}{tuukka.korhonen@uib.no}{https://orcid.org/0000-0003-0861-6515}{was supported by the Research Council of Norway via the project BWCA (grant no.~314528).}
\author{Tomáš Masařík}{Institute of Informatics, Faculty of Mathematics, Informatics and Mechanics, University of Warsaw, Warszawa, Poland \and \url{http://tarken.krakonos.org/research.php}}{masarik@mimuw.edu.pl}{https://orcid.org/0000-0001-8524-4036}{was supported by Polish National Science Centre SONATA-17 grant number 2021/43/D/ST6/03312.}
\authorrunning{\'E. Bonnet, J. Hodor, T. Korhonen, T. Masařík}
\keywords{Treewidth, induced minors, planar graphs, weakly sparse classes.}%TODO mandatory; please add comma-separated list of keywords
\newtheorem*{rep@theorem}{\rep@title}
\newcommand{\newreptheorem}[2]{%
\newenvironment{rep#1}[1]{%
 \def\rep@title{#2 \ref{##1}}%
 \begin{rep@theorem}}%
 {\end{rep@theorem}}}
\newcommand{\mis}{\textsc{Max Independent Set}\xspace}
\renewcommand{\geq}{\geqslant}
\renewcommand{\leq}{\leqslant}
\newcommand{\card}[1]{|{#1}|}
\newtheorem{question}{Question}%\newtheorem{lemma}[theorem]{Lemma}
\renewcommand{\P}{{\mathcal P}}
\newcommand\tw{\text{tw}}
\newcommand\tpw{\text{tpw}}
\begin{document}

\maketitle

\begin{abstract}
  A graph $G$ contains a graph $H$ as an induced minor if $H$ can be obtained from~$G$ after vertex deletions and edge contractions.
  We show that for every $k$-vertex planar graph $H$, every graph~$G$ excluding~$H$ as an induced minor and $K_{t,t}$ as a subgraph has treewidth at~most $\Delta(G)^{f(k,t)}$ where $\Delta(G)$ denotes the maximum degree of~$G$.
  Without requiring the absence of a~$K_{t,t}$ subgraph, Korhonen [JCTB '23] has shown the upper bound of~$k^{O(1)} 2^{\Delta(G)^5}$ whose dependence in $\Delta(G)$ is exponential.  

  Our result partially answers a question of Chudnovsky [Dagstuhl seminar '23] asking whether the treewidth of graphs with $\Delta(G)=O(\log{|V(G)|})$ excluding both \mbox{a~$k$-vertex} planar graph as an induced minor and the biclique $K_{t,t}$ as a subgraph is in $O_{k,t}(\log |V(G)|)$.
  We confirm that the treewidth is in this case polylogarithmic in $|V(G)|$.
\end{abstract}

\section{Introduction}\label{sec:intro}

A~graph $G$ contains a~graph $H$ as a~\emph{minor} if $H$ can be obtained from $G$ by vertex deletions, edge deletions, and edge contractions.
The notion of \emph{induced minor} is defined similarly except edge deletions are disallowed.
The celebrated Grid Minor theorem~\cite{RobertsonS86,RobertsonST94} implies that graphs without large grid minors have low treewidth.
What can be said about the treewidth of graphs solely excluding grids as \emph{induced} minor?
Their treewidth can be arbitrarily large, as exemplified by cliques.
However, a~notable result by Korhonen is that their treewidth can be upperbounded by a~function of their maximum degree $\Delta(\cdot)$.

\begin{theorem}[\cite{Korhonen23}]\label{thm:korhonen}
  Every graph $G$ excluding a~fixed $k$-vertex planar graph as an induced minor has treewidth at~most $k^\gamma 2^{\Delta(G)^5}$ for some universal constant $\gamma$.
\end{theorem}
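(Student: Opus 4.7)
The plan is to prove the contrapositive: if $\tw(G) > k^\gamma 2^{\Delta(G)^5}$ for a suitable $\gamma$, then $G$ contains $H$ as an induced minor. Since every $k$-vertex planar graph $H$ is an induced minor of an $O(k) \times O(k)$ elementary wall (walls have girth $6$, so a subdivision of $H$ using long paths through the bricks has no accidental chords and hence is an induced minor), and since an induced minor of an induced subgraph is an induced minor of the whole graph, the task reduces to exhibiting inside $G$ a large \emph{induced} wall of side length polynomial in $k$. The whole proof is thus an argument of the shape: large treewidth plus bounded degree implies large induced wall.

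For the first step I would invoke the Grid/Wall Minor Theorem: if $\tw(G)$ exceeds a prescribed function $F(N)$, then $G$ contains an $N \times N$ wall $W$ as a minor, witnessed by pairwise disjoint connected branch sets $\{X_v : v \in V(W)\}$ with a $G$-edge between $X_u$ and $X_v$ whenever $uv \in E(W)$. Since walls have maximum degree $3$, this model can even be refined to a $W$-subdivision $W' \subseteq G$ with only constant-factor loss in the side length. The subdivision $W'$ is in general not induced for two reasons: there may be \emph{chord adjacencies}, i.e., edges of $G$ between vertices of $W'$ that are not edges of $W'$, and \emph{external interferences}, i.e., vertices outside $V(W')$ whose addition would shortcut between distinct branch sets. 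Both corruptions are restricted by the degree bound: a single vertex creates at most $\Delta$ new incidences, and vertices of $G \setminus V(W')$ fall into only polynomially-in-$\Delta$ many local ``types'' determined by which branch sets they see.

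The main expected obstacle is the cleaning procedure that converts $W'$ into an induced sub-wall at the cost of shrinking the side length. I would classify each corruption by a local type encoding which branch sets are involved and the combinatorial position of the offending vertex within them; there are at most $\mathrm{poly}(\Delta)$ such types. A pigeonhole argument then isolates a large sub-wall on which one fixed type is uniformly present (or uniformly absent), and on such a homogeneous sub-wall a combination of vertex deletions, branch-set contractions, and planar reroutings (exploiting that the wall itself is planar and that bounded degree prevents long-range interference) eliminates the chosen type of corruption while preserving a sub-wall shrunk by a factor polynomial in $\Delta$. The delicate point is that removing one type of corruption may create new, shorter-range corruptions of other types inside the rerouted branch sets, so the cleaning has to be iterated through a nested sequence of rounds; tracking these interactions carefully is what produces the $2^{\Delta^5}$ loss in the side length.

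Once an induced sub-wall of side length $\Omega(k)$ is isolated, the proof is finished: every $k$-vertex planar graph $H$ embeds as a suitable subdivision into such a wall (planar embedding plus path routing across the bricks), and this subdivision witnesses $H$ as an induced minor of $G$, contradicting the hypothesis and yielding the bound $\tw(G) \leq k^\gamma 2^{\Delta(G)^5}$.
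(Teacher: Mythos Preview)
This theorem is not proved in the present paper; it is quoted from~\cite{Korhonen23} and invoked as a black box, most directly inside the proof of~\cref{lem:edge_clustered_coloring_gives_small_tw}. There is therefore no proof here to compare your attempt against. The only hint the paper gives about the original argument is the remark preceding~\cref{lem:sparsification-bounded-comp}: the contraction--uncontraction technique abstracted there is, in~\cite{Korhonen23}, ``specifically used over radius-$2$ balls.''

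On the substance of your sketch: the outer shell (large treewidth $\Rightarrow$ large wall subgraph $\Rightarrow$ clean to an induced wall $\Rightarrow$ every small planar graph as induced minor) is the natural shape, and the first and last steps are standard. The middle step, however, is not a proof as written. You posit that corruptions of the wall model fall into $\mathrm{poly}(\Delta)$ many ``types'' and that a pigeonhole-plus-rerouting round kills one type at polynomial cost in the side length, but you give no definition of a type, no reason the number of types is bounded independently of the side length, and no rerouting mechanism that demonstrably preserves a sub-wall. The sentence ``tracking these interactions carefully is what produces the $2^{\Delta^5}$ loss'' is a placeholder, not an argument; nothing in the sketch singles out the exponent~$5$. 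From what this paper indicates (and in fact from~\cite{Korhonen23} itself), Korhonen does not purify a fixed wall by type-by-type pigeonhole; he uses the contraction--uncontraction mechanism of~\cref{lem:sparsification-bounded-comp}: contract bounded-radius pieces, extract a subcubic subgraph of the quotient via~\cref{thm:degree-3-sparsifier}, then uncontract keeping only minimal connectors. If you want to supply a proof rather than a citation, that is the mechanism to work out; your cleaning step would need to be replaced by a concrete procedure with explicit accounting.
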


In this paper, we obtain a~polynomial dependence in $\Delta(G)$ if, further, arbitrarily large bicliques are excluded.

\begin{theorem}\label{thm:main}
  There is an $f: \mathbb N \times \mathbb N \to \mathbb N$ such that every graph $G$ without $K_{t,t}$ as a~subgraph nor fixed $k$-vertex planar graph as an induced minor has treewidth at~most $\Delta(G)^{f(t,k)}$.
\end{theorem}

We actually prove the following stronger statement.
\begin{theorem}\label{thm:main-stronger}
  There is an $f: \mathbb N \times \mathbb N \to \mathbb N$ such that every graph $G$ without $K_{t,t}$ as a~subgraph, and excluding as induced minors a~$k$-vertex planar graph and an~$\ell$-vertex graph has treewidth at~most $k^{O(1)} \Delta(G)^{f(t,\ell)}$.
\end{theorem}

Our tools combine well with classes of graphs that admit a~product structure; see~\cref{sec:product-structure} for the definition of the strong product $\boxtimes$ of two graphs.
More precisely, we prove the following.

\begin{theorem}\label{thm:product_structure}
  Let $H$ be a~graph of treewidth at most~$t$, and $P$ be a~path.
  Let $G$ be a~subgraph of $H \boxtimes P$ excluding a~$k$-vertex planar graph as an induced minor.
  Then the treewidth of $G$ is at~most $k^{O(1)} \cdot t^{O(1)} \cdot \Delta(G)^{O(1)}$. 
\end{theorem}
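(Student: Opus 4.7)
The plan is to re-run the structural argument underlying \cref{thm:main}, replacing the internal decomposition it constructs by the external, globally coherent one furnished by the product structure. This exchange is what converts the $2^{O(q)}$-term in the exponent of $\Delta(G)$ into a polynomial factor in $t$.

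Write $P = p_0 p_1 \cdots p_\ell$ and set $V_i := V(G) \cap (V(H) \times \{p_i\})$. The $V_i$ partition $V(G)$, each $G[V_i]$ embeds into $H$ and so has treewidth at most $t$, and every edge of $G$ lies either inside some $V_i$ or between consecutive $V_i, V_{i+1}$. In particular, any window of $L$ consecutive layers is a subgraph of $H \boxtimes P_L$ and has treewidth at most $(t+1)L - 1$. Thus if the treewidth of $G$ is large, any bramble or well-linked set witnessing it must either span many layers or, if concentrated in a short window, already contradict the treewidth estimate for $L$ constant.

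Assuming for contradiction that $\tw(G) > C \cdot k^a \cdot t^b \cdot \Delta(G)^c$ for suitable constants, I would invoke the key structural lemma behind \cref{thm:main}, of the form ``large treewidth combined with bounded maximum degree forces a large induced subdivided wall''. In the product-structured setting, this lemma ought to be applied with the layering above playing the role of the ``vertical'' direction of the wall: a long induced path following $P$ (of length polynomial in $k$, $t$, and $\Delta(G)$), together with transversal pieces drawn from individual layers via the small separators provided by $\tw(H) \leq t$, assembles a subdivided wall of side polynomial in $k$. A subdivided wall of side $\Omega(k)$ contains every $k$-vertex planar graph as an induced minor, contradicting the hypothesis on $G$.

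The main obstacle is the last step: ensuring that the subdivided wall is genuinely induced, i.e.\ that no unintended chords appear between its branches. This is handled exactly as in the proof of \cref{thm:main}: the degree bound $\Delta(G) \leq \Delta$ allows one to prune chords at the cost of a $\Delta^{O(1)}$ factor in the wall's side length. Packaging these estimates yields the claimed bound $\tw(G) \leq k^{O(1)} \cdot t^{O(1)} \cdot \Delta(G)^{O(1)}$.
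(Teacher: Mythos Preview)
Your first paragraph---the layering $V_i$ and the observation that any window of $L$ consecutive layers has treewidth at most $(t+1)L-1$---is correct and is exactly where the paper's proof begins. But from that point on the proposal is not a proof. You invoke ``the key structural lemma behind \cref{thm:main}, of the form `large treewidth combined with bounded maximum degree forces a large induced subdivided wall','' and then a step where ``the degree bound $\Delta(G)\le\Delta$ allows one to prune chords at the cost of a $\Delta^{O(1)}$ factor.'' Neither of these exists. The proof of \cref{thm:main} never builds an induced wall and never prunes chords: it produces a clustered edge-coloring (via KPR), then iterates the contraction--uncontraction step (\cref{lem:sparsification-bounded-comp}, \cref{lem:sparsification-bounded-clustered-col}) to extract an induced subgraph of \emph{bounded} maximum degree (independent of $\Delta(G)$) and still-large treewidth, and only then applies \cref{thm:korhonen}. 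Your sketch of ``assembling a subdivided wall'' from vertical paths and transversal pieces, and then deleting chords, is exactly the hard part one cannot do by hand; nothing in the paper supplies it.

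What the paper actually does with the product structure is stay entirely inside the edge-coloring framework. The layering defines a $3$-edge-coloring of $H\boxtimes P$: colour~$2$ on intra-layer edges, colours $0,1$ alternating on inter-layer edges. Each colour class is a disjoint union of copies of $H$ or of $H\boxtimes K_2$, hence has treewidth $O(t)$. Then \cref{lem:edge_clustered_tw_gives_small_tw} is applied with $h=3$: via tree-partitions (\cref{lem:clustered_edge_coloring_for_bounded_tw}) each bounded-treewidth colour class is refined into a $3$-edge-coloring with clustering $O(t\,\Delta(G)^2)$, giving a $9$-edge-coloring of $G$ with that clustering, and \cref{lem:edge_clustered_coloring_gives_small_tw} finishes. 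In short, the product structure replaces the KPR step, not a (non-existent) wall-construction step; the degree-reduction machinery of \cref{sec:contraction-uncontraction,sec:clustered_edge_coloring} is still what does the work.
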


%Note that there are graph classes that admit a~product structure of the form as in~\cref{thm:product_structure}, and yet possess every graph as induced minor; see \cite{DMW23}.
%Thus~\cref{thm:product_structure} is not a~special case of~\cref{thm:main}.
%For instance,~\cref{thm:product_structure} implies that $\ell$-planar graphs\footnote{those graphs that can be drawn in the plane such that every edge is intersected by at~most $\ell$ other edges} that exclude a~fixed $k$-vertex planar graph as an induced minor have treewidth $(k\ell \Delta(G))^{O(1)}$.

A~dependence in $\Delta(G)$ is necessary.
There are subgraphs of the strong product of a~path with a~star (hence a~graph $H$ of treewidth~1) avoiding a~planar induced minor, but whose treewidth is a~growing function of the number of vertices.
Take the~$n \times n$ grid, remove the ``vertical'' edges, and add in each ``column'' a~vertex adjacent to every vertex in the column; see~\cref{fig:pohoata-grid}.
This construction found by Pohoata~\cite{Pohoata14}, and rediscovered by~Davies~\cite{Davies22}, has treewidth $\Theta(n)$ but avoids the $5 \times 5$ grid as an induced minor.
The figure is a~proof-by-picture that these graphs are indeed subgraphs of strong products of a~path and a~star.

\begin{figure}[h!]
  \centering
  \begin{tikzpicture}[vertex/.style={draw,circle,inner sep=0.03cm}]
    \def\k{6}
    \pgfmathtruncatemacro\km{\k - 1}
    \def\sv{0.5}
    \def\sh{0.8}
    
    % vertices
    \foreach \i in {1,...,\k}{
      \foreach \j in {1,...,\k}{
        \node[vertex] (a\i\j) at (\i * \sh, \j *\sv) {};
      }
    }
    \foreach \i in {1,...,\k}{
      \node[vertex] (s\i) at (\i * \sh, 0) {};
    }

    % horizontal edges
    \foreach \i [count = \ip from 2] in {1,...,\km}{
      \foreach \j in {1,...,\k}{
        \draw (a\i\j) -- (a\ip\j) ;
      }
    }

    % star edges
    \foreach \i in {1,...,\k}{
      \foreach \j in {1,...,\k}{
        \draw (s\i) to [bend left = 20] (a\i\j) ;
      }
    }
\end{tikzpicture}
\caption{The Pohoata--Davies $6 \times 6$ grid.}
\label{fig:pohoata-grid}
\end{figure}
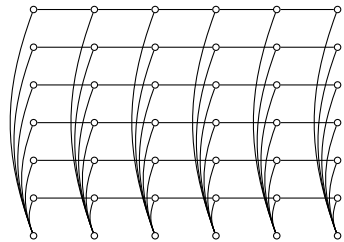

Chudnovsky~\cite[Open problem 4.1]{Chudnovsky22} asks if, when $\Delta(G) = O(\log |V(G)|)$, the treewidth of graphs excluding the $k \times k$ grid as an induced minor and the biclique $K_{t,t}$ as a~subgraph is $O_{t,k}(\log |V(G)|)$.
Our results give a~first answer to this question: The treewidth of these graphs is at~most polylogarithmic.

At first sight, Chudnovsky's question centered around forbidden induced subgraphs may look somewhat different from the setting of~\cref{thm:main}.
The two statements match since forbidding large cliques and bicliques as induced subgraphs is, by Ramsey's theorem~\cite{Ramsey30}, equivalent to excluding large bicliques as subgraphs, and forbidding a~subdivision of a~large wall or the line graph of a~subdivision of a~large wall as an induced subgraph is the same as excluding a~large grid as an induced minor.
Another simplifying feature of working with induced minors rather than induced subgraphs is that excluding as induced minor a~large grid, or a~large wall, or a~planar graph of large treewidth are all equivalent.

The motivation behind the $\Delta(G) = O(\log |V(G)|)$ condition in Chudnovsky's question is that the treewidth could in principle be logarithmic in $|V(G)|$ as well.
This would yield polynomial-time algorithms for several problems including \mis.
We come slightly short of proving it, but~\cref{thm:main} does imply a~quasipolynomial-time algorithm for \mis (and several other problems) on these graphs.

It is possible (and believed) that graphs $G$ excluding a~$k$-vertex planar graph as an induced minor have treewidth $g(k) \Delta(G)$, for some function~$g$, even without requiring the absence of $K_{t,t}$ subgraph.
This also is motivated by fast algorithms for \mis, as it would imply a~subexponential-time algorithm running in $2^{\Tilde{O}_k(\sqrt{|V(G)|})}$.
Dallard, Milanič, and Štorgel~\cite{Dallard-1} even ask whether a~(quasi)polynomial-time algorithm always exists in the absence of a~fixed planar induced minor. 
After Korhonen~\cite{Korhonen23} gave the first (very slightly) subexponential algorithm, Korhonen and Lokshtanov~\cite{KorhonenL23} provided an algorithm running in time $2^{\Tilde{O}_k(|V(G)|^{2/3})}$, which extends to the case when the forbidden induced minor is non-planar. 
There have been several recent developments in (quasi)polynomial algorithms for \mis on graphs excluding a~planar induced minor~\cite{Bonamy23,Bonnet23,Dallard-3,Dallard-2,Gartland20,Gartland23,Gartland21,PilipczukPR21}, some phrased in terms of forbidden induced subgraphs instead.

The most motivating next step would be to show \cref{thm:main} without requiring our graphs to exclude a~fixed biclique $K_{t,t}$ as a~subgraph.
Let us explicitly mention the potential further improvements by increasing difficulty.

\begin{question}\label{q:main}
  Does every graph $G$ excluding a~fixed $k$-vertex planar graph as an induced minor have, for some function $f$, treewidth at most $\Delta(G)^{f(k)}$? treewidth at most $f(k) \Delta(G)^{k^{O(1)}}$? treewidth at~most $f(k) \Delta(G)^{O(1)}$? treewidth at~most $f(k) \Delta(G)$?  
\end{question}

We note that Gartland and Lokshtanov~\cite{gl-private} conjecture the following, which would in particular imply a~positive answer to every case of the above question.

\begin{conjecture}[Gartland--Lokshtanov]
  There is a~function $f: \mathbb N \to \mathbb N$ such that every graph excluding a~fixed $k$-vertex planar graph as an induced minor has a~balanced separator dominated by at~most $f(k)$ vertices.
\end{conjecture}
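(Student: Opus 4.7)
The plan would be to attack the Gartland--Lokshtanov conjecture by contraposition: assume $G$ excludes a $k$-vertex planar graph $H$ as an induced minor but admits no balanced separator dominated by $f(k)$ vertices, where $f$ is a function to be determined, and try to construct $H$ as an induced minor of $G$. The starting point is \cref{thm:main-simple}, which already hands us a tree decomposition of $G$ of width at most $\Delta(G)^{2^{O(k)}}$. Taking a central bag gives a balanced separator $X$ of that size, and the task is to compress $X$ into $f(k)$ vertices whose closed neighborhoods cover it.

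A first reduction I would attempt is to split $X$ according to degree. If a large fraction of the vertices of $X$ have degree close to $\Delta(G)$, then a greedy/averaging argument suffices: each high-degree vertex dominates a proportional share of $X$, and iterating $f(k)$ times covers everything. The complementary case, where $X$ is dominated by low-degree vertices, is the genuinely interesting one. Here I would try to exploit the very absence of a small dominator: no few vertices of $G$ can cover $X$, so $X$ must contain an ``antichain under domination'' of size growing with the treewidth, i.e.\ a large set whose vertices have pairwise disjoint closed neighborhoods. Combined with the hereditary version of \cref{thm:main-simple}, one could then iterate this argument inside an appropriate subgraph to build up branch sets of bounded radius which are mutually independent, aiming for a subdivision of a large planar graph in an induced fashion.

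The main obstacle, and the reason the conjecture remains open, is converting this combinatorial surplus into an \emph{induced} minor of $H$ rather than merely a minor or topological minor. Standard grid-minor and flat-wall tools produce models in which branch paths may share private neighbors, which destroys the induced-minor structure after contraction. One would need something like an induced-minor analogue of the flat-wall theorem, where a large resilient wall can be located whose branch paths are anticomplete up to bounded-size exceptions. Korhonen's proof of \cref{thm:korhonen} already performs a weaker form of this extraction, so a natural concrete plan would be to revisit that argument and try to replace the exponential dependence on $\Delta(G)$ there by a single ``hub'' vertex at each recursive step, in the hope of obtaining a dominated balanced separator rather than a small one.

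A fallback route I would try in parallel is a direct recursive scheme: given $G$, find one vertex $v$ whose closed neighborhood covers a constant fraction of some small balanced separator of $G$, recurse on $G - N[v]$, and concatenate the dominators. Making this recursion terminate after $f(k)$ steps (independent of $\Delta(G)$) would prove the conjecture, but I expect the depth of this recursion to blow up unless one can certify that the residual graph after removing $N[v]$ has strictly simpler induced-minor-free structure — precisely the kind of quantitative gain that the current techniques, including those underlying \cref{thm:main-simple}, do not seem to provide.
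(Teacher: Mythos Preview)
This statement is presented in the paper as an \emph{open conjecture}, not as a theorem; the paper offers no proof of it and explicitly discusses what its truth would imply (linear-in-$\Delta$ treewidth, subexponential algorithms for \mis). There is therefore no ``paper's own proof'' to compare your proposal against.

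Your write-up is not a proof either, and you are candid about this: you outline several possible attack lines and, for each, identify the place where it stalls. That is an honest assessment, but it means the proposal does not establish the conjecture. A~few concrete remarks on the specific ideas:
\begin{itemize}
  \item The ``degree-splitting'' reduction does not work as stated. Having many high-degree vertices in a balanced separator $X$ does not mean any single vertex dominates a large fraction of $X$; the neighborhoods could be almost entirely outside $X$. So the greedy covering argument needs a different hypothesis.
  \item The fallback recursion (remove $N[v]$, recurse) is exactly the kind of scheme whose depth one would need to bound by a function of $k$ alone, and as you note, nothing in \cref{thm:main-simple} or \cref{thm:korhonen} gives a structural simplification after deleting one closed neighborhood. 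Without such a potential function the recursion depth could depend on $\Delta(G)$ or $|V(G)|$.
  \item The suggestion to revisit Korhonen's argument and replace the exponential $\Delta$-dependence by a single hub per step is precisely the open content of the conjecture; it is a restatement of the goal rather than a route to it.
\end{itemize}
In short: the paper does not prove this statement, and your proposal correctly identifies it as open while sketching avenues that currently have genuine gaps.
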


Fully spelled out, the conjecture says that for every $G$ excluding a~$k$-vertex planar graph as an induced minor, there is a~set $D \subseteq V(G)$ of size at~most $f(k)$ such that $G-N[D]$ has no connected component of size larger than $|V(G)|/2$.
In particular, these graphs would have balanced separators of size $f(k)(\Delta(G)+1)$, known to imply treewidth $O(f(k) \Delta(G))$~\cite{DvorakN19}.
If true, by a~simple win-win argument, \mis could be solved in time $2^{\Tilde{O}_k(\sqrt{n})}$ on $n$-vertex graphs excluding a~$k$-vertex planar graph as an induced minor.

\section{Preliminaries}

If $i \leqslant j$ are two integers, we denote by $[i,j]$ the set of integers $\{i,i+1,\ldots,j-1,j\}$, and by~$[i]$, the set $[1,i]$.
We denote by $V(G)$ and $E(G)$ the set of vertices and edges of a graph $G$, respectively.
For $S \subseteq V(G)$, the \emph{subgraph of $G$ induced by $S$}, denoted $G[S]$, is obtained by removing from $G$ all the vertices that are not in $S$ (together with their incident edges).
Then $G-S$ is a short-hand for $G[V(G)\setminus S]$.
A~\emph{star} is a~tree with at~most one non-leaf vertex.

We denote by $N_G(v)$ and $N_G[v]$, the open, respectively closed, neighborhood of $v$ in $G$.
For $S \subseteq V(G)$, we set $N_G(S) := (\bigcup_{v \in S}N_G(v)) \setminus S$ and $N_G[S] := N_G(S) \cup S$.
We may omit the subscript if $G$ is clear from the context.

We denote by $\Delta(G)$ the maximum degree of a~graph $G$, and by $\tw(G)$, its treewidth.
A~coloring of $G$ is a~mapping $c: V(G) \to [k]$ for some natural~$k$.
It is \emph{proper} if $c(u) \neq c(v)$ holds for every $uv \in E(G)$.
We may call $c$ a~\emph{$k$-coloring}.
The sets $c^{-1}(1), \ldots, c^{-1}(k)$ are then called \emph{color classes}, with $c^{-1}(i) = \{v \in V(G)~:~c(v)=i\}$ for each $i \in [k]$.
A~\emph{star coloring} of $G$ is a proper coloring such every two color classes induce a~\emph{star forest}, i.e., a~disjoint union of stars.
The \emph{star chromatic number} (resp.~\emph{chromatic number}) of $G$ is the minimum $k$ such that $G$ admits a~star coloring (resp.~proper coloring) with $k$ color classes.   

The \emph{radius $\text{rad}(G)$} of a graph $G$ is defined as $\min_{u \in V(G)} \max_{v \in V(G)} d_G(u,v)$, where $d_G(u,v)$ is the number of edges in a shortest path between $u$ and $v$.
The \emph{radius $\text{rad}_G(S)$} of a subset of vertices $S \subseteq V(G)$ is simply defined as $\text{rad}(G[S])$.
Note that two vertices can be further away in $G[S]$ than in $G$.
A~\emph{depth-$r$ minor} $H$ of $G$, denoted by $H \preccurlyeq_r G$, is a minor of~$G$ with branch sets $B_1, \ldots, B_{\card{V(H)}}$ satisfying $\text{rad}_G(B_i) \leqslant r$ for every $i \in [\card{V(H)}]$.
In particular depth-0 minors correspond to subgraphs.
The theory of graph sparsity pioneered by Nešetřil and Ossona de Mendez~\cite{sparsity} introduces the following invariants for a graph $G$ and a class $\mathcal C$:
$$\nabla_r(G) := \underset{H \preccurlyeq_r G}{\sup}~\frac{|E(H)|}{|V(H)|},~\text{and}~\nabla_r(\mathcal C) := \underset{G \in \mathcal C}{\sup}~\nabla_r(G).$$
%Note that $\nabla_0(G)$ is half of the maximum average degree of $G$.

A class $\mathcal C$ of graphs is said to have \emph{bounded expansion} if $\nabla_r(\mathcal C) < \infty$ for every $r \in \mathbb N$.
We say that a graph \emph{$G$ has expansion $f$}, or that \emph{$f$ bounds the expansion of $G$}, if $\nabla_r(G) \leqslant f(r)$ for every $r \in \mathbb N$.

\section{Contraction--uncontraction technique}\label{sec:contraction-uncontraction}

We will need a~\emph{treewidth sparsifier}, i.e., the extraction of a~subcubic subgraph of large treewidth in a~graph of larger treewidth.
We could here use the Grid Minor theorem~\cite{RobertsonST94}, but the following result of Chekuri and Chuzhoy provides a~better lower bound in the resulting treewidth.

\begin{theorem}[\cite{Chekuri15}]\label{thm:degree-3-sparsifier}
  There is a~constant $\delta > 0$ such that every graph of treewidth $k$ admits a~subcubic subgraph of treewidth at least~$k/\log^\delta k$.
\end{theorem}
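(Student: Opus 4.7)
The plan is to exhibit a subdivided wall of height $\Omega(k/\log^\delta k)$ as a subgraph of $G$, since a subdivided wall is subcubic and has treewidth equal to its height up to a constant factor. At a high level, I would first locate a grid-minor-like structure of the right order, and then lift it to a subgraph by careful routing.

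First, as a warm-up, I would recall Chuzhoy's polynomial Grid Minor theorem: every graph of treewidth $k$ contains a wall of order $k^{\Omega(1)}$ as a subgraph. This already gives a subcubic subgraph of treewidth $k^{\Omega(1)}$, but the loss is polynomial rather than polylogarithmic, so a more refined approach is needed to reach $k/\log^\delta k$. For the sharper bound, I would work with a \emph{well-linked set} $T \subseteq V(G)$ of size $\Omega(k)$, whose existence in a graph of treewidth $k$ is classical. From $T$, I would aim to build a \emph{path-of-sets system} of width and length both $\Omega(k/\log^\delta k)$. This is done by iteratively carving clusters out of $G$ and routing edge-disjoint flow between consecutive clusters, paying only a polylogarithmic factor per round via LP-based multicommodity flow rounding and region-growing (as in the Leighton--Rao / Klein--Plotkin--Rao style decompositions).

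From such a path-of-sets system, I would extract the desired subdivided wall as a \emph{subgraph} rather than merely a minor: the horizontal paths of the wall correspond to routes between consecutive clusters, and the vertical paths are chosen inside the clusters using the well-linkedness of their interfaces. By a congestion-trimming argument, one can select the collection of routes so that each vertex of $G$ lies on at most three of the chosen paths, yielding a subcubic subgraph. Since a wall of height $h$ has treewidth $\Theta(h)$, this gives the bound.

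The main obstacle is the routing step: ensuring that the ``width versus length'' of the path-of-sets and the congestion simultaneously lose only polylogarithmic, not polynomial, factors in $k$. Naive max-flow/min-cut or uniform ball-growing arguments blow up polynomially with the number of terminals. Overcoming this requires the iterated LP-rounding and expander-based routing machinery that is the technical heart of Chekuri--Chuzhoy; I would view this as the black box that converts the well-linked set into a structured subcubic witness, and the rest of the argument is combinatorial bookkeeping.
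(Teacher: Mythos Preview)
The paper does not prove this theorem at all: it is quoted verbatim from Chekuri and Chuzhoy~\cite{Chekuri15} and used as a black box (it is invoked once inside Lemma~\ref{lem:sparsification-bounded-comp} and implicitly in Lemma~\ref{lem:sparsification-bounded-clustered-col}). So there is no ``paper's own proof'' to compare your proposal against.

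For what it is worth, your sketch does track the genuine Chekuri--Chuzhoy argument at a very high level: one starts from a well-linked set of size $\Omega(k)$, builds a path-of-sets system with only polylogarithmic loss, and extracts a subcubic witness from it. Two cautions, though. First, the object Chekuri--Chuzhoy actually produce is a \emph{topological minor} of a large-treewidth subcubic graph (equivalently, a subcubic subgraph), not specifically a subdivided wall; going all the way to a wall would reintroduce a polynomial loss via the Grid Minor theorem, which is exactly what you say you want to avoid. Second, the congestion-trimming step you describe (``each vertex lies on at most three of the chosen paths'') is not how degree~3 is achieved in their argument; rather, the path-of-sets system already comes with degree-bounded interfaces, and the subcubic bound falls out of how the clusters are stitched together. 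So your proposal captures the spirit but misattributes where the degree bound comes from and overshoots on the target structure. In any case, for the purposes of this paper you may simply cite the result.
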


The next lemma abstracts out the contraction--uncontraction technique of the third author which, in~\cite{Korhonen23}, is specifically used over radius-2 balls.

\begin{lemma}\label{lem:sparsification-bounded-comp}
  Let $p$ be a~positive integer, $G$ be a~graph, and $F \subseteq E(G)$ be such that every connected component of the graph $(V(G),F)$ has at most $p$~vertices. 
  Then, $G$ admits an induced subgraph~$G'$ such that 
  \begin{compactitem}
  \item in $G'$, every vertex is incident to at~most three edges of $F \cap E(G')$, and
  \item $\tw(G') \geqslant \tw(G)/(p \log^\delta \tw(G))$, with $\delta$ the constant of~\cref{thm:degree-3-sparsifier}.
  \end{compactitem}
\end{lemma}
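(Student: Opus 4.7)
My plan is a contraction--uncontraction argument. Let $\mathcal P$ be the partition of $V(G)$ into the connected components of $(V(G), F)$; each part has size at most $p$. Let $G^{*} := G/\mathcal P$ be the contracted graph. A standard lifting argument shows $\tw(G) \leqslant p \cdot \tw(G^{*}) + p - 1$: given any tree decomposition of $G^{*}$, replacing every super-vertex (of size at most $p$) in every bag by its underlying vertices produces a tree decomposition of $G$ whose bags have size at most $p(\tw(G^{*}) + 1)$. Rearranging, $\tw(G^{*}) \geqslant \tw(G)/p - 1$. Applying \cref{thm:degree-3-sparsifier} to $G^{*}$ yields a subcubic subgraph $H$ of $G^{*}$ with $\tw(H) \geqslant \tw(G^{*})/\log^\delta \tw(G^{*}) \geqslant \tw(G)/(p \log^\delta \tw(G))$, up to lower-order terms.

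Next I uncontract $H$ back inside $G$. Every $v \in V(H)$ corresponds to an $F$-component $C_v$ and has at most three incident edges in $H$. For each such edge $e = vw$, I fix one witness edge in $E(G)$ between $C_v$ and $C_w$, and let $R_v \subseteq C_v$ collect the endpoints in $C_v$ of these at most three witness edges, so $|R_v| \leqslant 3$. I then pick a connected \emph{branch set} $B_v \subseteq C_v$ containing $R_v$ by taking a Steiner subgraph for $R_v$ inside the $F$-subgraph $(C_v, F \cap E(G[C_v]))$: a minimum Steiner tree there has at most three leaves, hence maximum degree at most three within itself, and one further reroutes or prunes any chord $F$-edge that would otherwise inflate the induced $F$-degree. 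Setting $G' := G[\bigcup_v B_v]$, the family $\{B_v\}_{v \in V(H)}$ is pairwise disjoint (since the $C_v$'s are), each $B_v$ is connected in $G'$, and the fixed witness edges realize every edge of $H$ between the branch sets; so $\{B_v\}$ is a minor model of $H$ in $G'$ and $\tw(G') \geqslant \tw(H)$. Because $F$-edges only join vertices of the same $F$-component, the $F$-degree of any $u \in B_v$ in $G'$ equals its $F$-degree within $G[B_v]$, which is at most three by construction.

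The main obstacle is the Steiner-subgraph step: a naive minimum Steiner tree does not immediately suffice, since chord $F$-edges between non-adjacent tree vertices can push the induced $F$-degree above three. I would resolve this with a small combinatorial lemma asserting that inside any connected graph, any set of at most three terminals admits a connected containing vertex subset whose induced subgraph has maximum degree at most three---provable, for example, by induction on the graph size or via a union-of-shortest-paths construction with careful shortcutting.
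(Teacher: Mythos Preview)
Your contraction--uncontraction plan is exactly the paper's proof. The paper resolves your flagged obstacle by taking each branch set $P'$ \emph{minimal} (no single vertex deletable while staying connected and keeping the at-most-three terminals) and arguing that if some $v\in P'$ had four neighbours in $G[P']$, then shortest paths from $v$ to the terminals in $G[P']$ each meet exactly one neighbour of $v$, so a fourth neighbour could be deleted---this is precisely your ``union-of-shortest-paths'' idea; separately, the paper also records $\tw(G/\mathcal P)\le \tw(G)$ (contraction cannot raise treewidth), which replaces your ``up to lower-order terms'' by the clean stated bound $\tw(G)/(p\log^{\delta}\tw(G))$.
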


\begin{proof}
  Let $\P$ be the partition $\{P_1, \ldots, P_h\}$ of $V(G)$ into the vertex sets of the connected components of~$(V(G),F)$.
  It follows that $|P_i| \leqslant p$ for every $i \in [h]$.
  In particular, $\tw(G/\P) \geqslant \tw(G)/p$.
  Indeed, a~tree-decomposition of~$G/\P$ of width at most $\tw(G)/p-1$ could be turned into a~tree-decomposition of~$G$ of width at most~$\tw(G)/p \cdot \max_{i \in [h]} |P_i| - 1 \leqslant \tw(G)-1$, simply by flattening the parts of $\P$ in each bag, leading to a~contradiction.
  On the other hand, $\tw(G/\P) \leqslant \tw(G)$ since $G/\P$ is obtained from~$G$ by performing edge contractions, as each $P_i$ is connected. 

  By \cref{thm:degree-3-sparsifier} applied to $G/\P$, there is a~subcubic subgraph $H$ of $G/\P$ with 
  $$\tw(H) \geq \frac{\tw(G/\P)}{\log^\delta \tw(G/\P)} \geqslant \frac{\tw(G)}{p \log^\delta \tw(G)}.$$

  We now build an induced subgraph $G'$ of $G$ having $H$ as a~minor (hence at least its treewidth) such that every vertex of $G'$ is incident to at~most three edges of~$F$.
  As $H$ is subcubic, each $P \in V(H)$ is incident to at most three edges of $H$.
  From each $P \in V(H)$, let us keep a~minimal subset $P' \subseteq P$ such that $G[P']$ is connected and $G[\bigcup_{P \in V(H)} P']/\P'$ still contains $H$ as a~subgraph, where $\P' := \{P' : P \in V(H)\}$.
  By \emph{minimal} we mean that for each $P' \in V(H)$, the removal of any vertex in $P'$ breaks one of the latter conditions.
  
  Note that each $P' \in V(H)$ comprises up to three \emph{terminals} realizing the up-to-three edges in $H$, plus a~minimal subset connecting these three terminals in $P$.
  Therefore, if $P'$ would contain a~vertex $v$ with more than three neighbors in $P'$, we could delete one of its neighbors by taking shortest paths from $v$ to the terminals in $G[P']$ and deleting a~neighbor not used in these shortest paths.
  This implies that every vertex of $P'$ is incident to at~most three edges of~$F$ in $G[P']$, since no edge of $F$ can have exactly one endpoint in~$P$.

  Thus we set $G' := G[\bigcup_{P' \in V(H)} P']$, and get $\tw(G') \geqslant \tw(H) \geqslant \tw(G)/(p \log^\delta \tw(G))$.
\end{proof}

\section{Star coloring with constantly many colors}\label{sec:star-coloring}

Building on a classic result by Kühn and Osthus~\cite{Kuhn04}, Dvořák showed the following.

\begin{theorem}[\cite{Dvorak18}]\label{thm:dvorak}
  For every non-negative integer $t$ and graph $H$, there is a function $f_{t,H}: \mathbb N \to \mathbb N$ such that every graph without $K_{t,t}$ subgraph nor induced subdivision of $H$ has expansion $f_{t,H}$.
\end{theorem}

Kühn and Osthus showed the same statement with the weaker conclusion that the degeneracy is bounded by a function of $t$ and~$H$.
In turn, by the work of Nešetřil and Ossona de Mendez, graphs of bounded expansion have bounded star chromatic number.

\begin{theorem}[\cite{Nesetril08}, Theorem 7.1, (5) $\Rightarrow$ (3) with $p=2$]\label{thm:star-coloring}
  Every graph class with bounded expansion has bounded star chromatic number.
\end{theorem}

We also observe the following.
\begin{observation}\label{obs:ind-subd-minor}
  Every graph excluding a~graph $H$ as an induced minor also excludes $H$ as an induced subdivision.
\end{observation}

Combining~\cref{thm:dvorak,thm:star-coloring,obs:ind-subd-minor} we get a~bounded star coloring for our graphs of interest.
\begin{theorem}\label{thm:bd-star-coloring}
  There is a~function $c: \mathbb N \times \mathbb N \to \mathbb N$ such that every graph without $K_{t,t}$ as a~subgraph nor fixed $k$-vertex graph $H$ as an induced minor admits a~star $c(t,k)$-coloring. 
\end{theorem}
Note that $H$ need not be planar in~\cref{thm:bd-star-coloring}.

\section{Reduced number of sparsification rounds}

We now use the contraction--uncontraction technique~\cite{Korhonen23}; see~\cref{sec:contraction-uncontraction}.
In a~first step, we lower the maximum degree.
In a~second step, we simply use Korhonen's result (see~\cref{thm:korhonen}) on an induced subgraph of low maximum degree.
The crucial difference with~\cite{Korhonen23} is that the number of rounds does not depend on the ``initial'' maximum degree $\Delta(G)$ but solely on $t$ and $k$ (such that $K_{t,t}$ is not a~subgraph of~$G$, and $G$ excludes a~$k$-vertex planar graph).

We successively apply \cref{lem:sparsification-bounded-comp} ${c(t,k) \choose 2}$ times, where $c(t,k)$ is the function of~\cref{thm:bd-star-coloring}, on stars formed by every pair of color classes in a~star coloring.

\begin{lemma}\label{lem:weakly-sparse-to-sparse}
  Let $t$ be a positive integer, and $H$ be a~fixed $k$-vertex graph. 
  Every graph $G$ without $K_{t,t}$ as a~subgraph nor $H$ as an induced minor has an induced subgraph $\widehat{G}$ such that
  \begin{compactitem}
    \item $\Delta(\widehat G) \leqslant 3(c-1)$, and 
    \item $\tw(\widehat G) \geqslant \tw(G)/((\Delta(G)+1) \log^\delta \tw(G))^{c \choose 2}$,
  \end{compactitem}
  where $c := c(t,k)$ is as in~\cref{thm:bd-star-coloring}.
\end{lemma}
\begin{proof}
  Let $A_1, \ldots, A_c$ be the color classes of a~star coloring of~$G$ given by~\cref{thm:bd-star-coloring}.
  For every unordered pair $i \neq j \in [c]$, $G[A_i \cup A_j]$ is a~star forest, a~property that is closed under taking induced subgraphs.
  We set $F_{ij} := E(G[A_i \cup A_j]) \subseteq E(G)$ and $q := {c \choose 2}$.

  We build a~chain for the \emph{induced subgraph} relation $G=G_0, G_1 \subseteq_i G_2 \subseteq_i  \ldots \subseteq_i G_{q - 1} \subseteq_i G_q=\widehat G$, in the following way.
  We (bijectively) list the unordered pairs $i \neq j \in [c]$ from 1 to $q$.
  We obtain $G_h$, where $h$ corresponds to the pair $\{i,j\}$, by applying \cref{lem:sparsification-bounded-comp} on the triple $G_{h-1}$, $F := F_{ij} \cap E(G_{h-1})$, and $p := \Delta(G)+1$.
  We recall that \cref{lem:sparsification-bounded-comp} takes in addition to a~graph (here $G_{h-1}$), an edge subset $F$, and an integer $p$.
  As $(V(G),F_{ij})$ is a~star forest, so is its induced subgraph $(V(G_{h-1}), F)$.
  Thus we indeed have that every connected component $(V(G_{h-1}), F)$ has at~most~$p = \Delta(G)+1$ vertices.

  We get that $\tw(G_p) \geqslant \tw(G_{p-1})/((\Delta(G)+1) \log^\delta \tw(G))$.
  It thus eventually holds that $\tw(\widehat G) = \tw(G_q) \geqslant \tw(G)/((\Delta(G)+1) \log^\delta \tw(G))^q$.
  Fix any $i \in [c]$ and $v \in V(\widehat G) \cap A_i$.
  For every $j \in [c] \setminus \{i\}$, at~most three edges of $F_{ij}$ can be incident to $v$ in $\widehat G$.
  So $v$ has degree at~most $3(c-1)$ in $\widehat G$.
  Thus $\widehat G$ satisfies the claimed properties.
\end{proof}

We can now prove our main theorem, whose statement we recall for convenience.

\begin{reptheorem}{thm:main}\label{thm:main-recalled}
  There is an $f: \mathbb N \times \mathbb N \to \mathbb N$ such that every graph $G$ without $K_{t,t}$ as a~subgraph nor fixed $k$-vertex planar graph as an induced minor has treewidth at~most $\Delta(G)^{f(t,k)}$.
\end{reptheorem}
\begin{proof}
  Let $c := c(t,k)$ be as in~\cref{thm:bd-star-coloring}, $q := {c \choose 2}$, $\delta$ be the constant of~\cref{thm:degree-3-sparsifier}, $\gamma$ be that of~\cref{thm:korhonen}, and $g(t,k)$ be the largest integer $s$ such that $\log^{\delta q} s > \sqrt s$.
  We can assume that $\Delta(G) \geqslant 3$ since otherwise the validity of the theorem statement is clear.
  By~\cref{lem:weakly-sparse-to-sparse}, $G$ admits an induced subgraph $\widehat G$ with maximum degree at~most $3(c-1)$ and treewidth at~least $\tw(G)/((\Delta(G)+1) \log^\delta \tw(G))^q$.

  As $\widehat G$ satisfies the same hereditary properties as~$G$, by~\cref{thm:korhonen}, its treewidth is at~most $k^\gamma 2^{(3(c-1))^5}$.
  Therefore, $$\tw(G)/((\Delta(G)+1) \log^\delta \tw(G))^q \leqslant \tw(\widehat G) \leqslant k^\gamma 2^{(3(c-1))^5},~\text{thus}$$
  $$\tw(G)/\log^{\delta q}\tw(G) \leqslant k^\gamma 2^{(3(c-1))^5} \cdot (\Delta(G)+1)^q.$$
  Either $\tw(G) \leqslant g(t,k)$ (and we are done as long as $f(t,k) \geqslant g(t,k)$) or $\tw(G)/\log^{\delta q}\tw(G) \geqslant \sqrt{tw(G)}$.
  In the latter case,
  $$\tw(G) \leqslant k^{2 \gamma} 2^{2(3(c-1))^5} \cdot (\Delta(G)+1)^{2q} \leqslant k^{2 \gamma} 2^{2(3(c-1))^5} 2^{2q} \cdot \Delta(G)^{2q}.$$
  We conclude by choosing $f(t,k) := \max \left(g(t,k), \left\lceil \log \left(k^{4 \gamma} 2^{972(c(t,k)-1)^5} 2^{4{c(t,k) \choose 2}}\right) \right\rceil \right)$.
\end{proof}

As~\cref{lem:weakly-sparse-to-sparse} does not require the excluded induced minor to be planar, we proved:
\begin{reptheorem}{thm:main-stronger}\label{thm:main-stronger-recalled}
There is an $f: \mathbb N \times \mathbb N \to \mathbb N$ such that every graph $G$ without $K_{t,t}$ as a~subgraph, and excluding as induced minors a~$k$-vertex planar graph and an~$\ell$-vertex graph has treewidth at~most $k^{O(1)} \Delta(G)^{f(t,\ell)}$.
\end{reptheorem}
  
\section{Clustered edge-colorings}

The combination of \cref{sec:contraction-uncontraction,sec:star-coloring} suggests the use of (non-necessarily proper) edge-colorings any connected component induced by any monochromatic component of which has small size.  
This is referred to as \emph{clustered edge-coloring}.
%We note that the vertex kind, \emph{clustered coloring} has been more thoroughly explored. 
More precisely, an edge-coloring of a~graph $G$ has \emph{clustering~$p$} if for every monochromatic component $F$, every connected component of $(V(G),F)$ has at~most $p$~vertices.  
For instance, an edge-coloring with clustering~2 is a~proper edge-coloring.

\begin{lemma}\label{lem:sparsification-bounded-clustered-col}
  Let $p$ be a~positive integer, $G$ be a~graph, and $F_1, \ldots, F_h$ be the color classes of an edge-coloring of $G$ with clustering~$p$.
  Then, $G$ admits an induced subgraph~$G'$ such that 
  \begin{compactitem}
  \item $\Delta(G') \leqslant 3h$, and
  \item $\tw(G') \geqslant \tw(G)/(p \log^\delta \tw(G))^h$, with $\delta$ the constant of~\cref{thm:degree-3-sparsifier}.
  \end{compactitem}
\end{lemma}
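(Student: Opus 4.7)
The plan is to iterate \cref{lem:sparsification-bounded-comp} once per color class, each time preserving the ``at most three edges per color per vertex'' property acquired in previous iterations. Set $G_0 := G$, and process the colors one by one. The invariant to maintain at step $i \in [0,h]$ is that $G_i$ is an induced subgraph of $G$ with
\[
  \tw(G_i) \;\geqslant\; \frac{\tw(G)}{(p \log^{\delta} \tw(G))^i},
\]
and such that every vertex of $G_i$ is incident to at~most three edges of $F_j \cap E(G_i)$ for each $j \in [i]$.

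For the inductive step from $G_i$ to $G_{i+1}$, apply \cref{lem:sparsification-bounded-comp} to $G_i$ with edge set $F := F_{i+1} \cap E(G_i)$. Since every connected component of $(V(G_i), F_{i+1} \cap E(G_i))$ is contained in a connected component of $(V(G), F_{i+1})$, each has at most $p$~vertices, so the hypothesis of \cref{lem:sparsification-bounded-comp} is satisfied. The lemma produces an induced subgraph $G_{i+1}$ of $G_i$ in which every vertex is incident to at~most three edges of $F_{i+1} \cap E(G_{i+1})$, and $\tw(G_{i+1}) \geqslant \tw(G_i)/(p \log^{\delta} \tw(G_i))$. Crucially, passing from $G_i$ to the induced subgraph $G_{i+1}$ cannot increase the number of $F_j$-edges at any surviving vertex, so the bound of three incident edges per color $j \in [i]$ is preserved, and $G_{i+1}$ satisfies the invariant for $j \in [i+1]$. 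The treewidth lower bound follows from the inductive hypothesis together with the monotonicity $\tw(G_i) \leqslant \tw(G)$, which yields $\log^{\delta}\tw(G_i) \leqslant \log^{\delta}\tw(G)$.

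After $h$~iterations, set $G' := G_h$. Every vertex of $G'$ is incident to at~most three edges of $F_j$ for each $j \in [h]$, and since $F_1,\dots,F_h$ partition $E(G)$ (and hence $E(G')$), we obtain $\Delta(G') \leqslant 3h$. The invariant then gives the desired treewidth bound $\tw(G') \geqslant \tw(G)/(p \log^{\delta} \tw(G))^h$.

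No step looks like a real obstacle; the only subtlety worth double-checking is the monotonicity argument needed to replace $\log^{\delta}\tw(G_i)$ by the uniform quantity $\log^{\delta}\tw(G)$ when unrolling the recurrence, and the observation that restricting to an induced subgraph cannot create new incidences with edges of previously processed color classes.
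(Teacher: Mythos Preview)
Your proposal is correct and follows essentially the same approach as the paper: iterate \cref{lem:sparsification-bounded-comp} over the color classes, setting $G_0:=G$ and $G':=G_h$, then observe that each vertex of $G'$ has at most three incident edges per color. You are in fact slightly more explicit than the paper about why the per-color degree bounds persist under taking induced subgraphs and why one may replace $\log^\delta\tw(G_i)$ by $\log^\delta\tw(G)$ when unrolling the recurrence.
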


\begin{proof}
  Set $G_0 := G$.
  For every $i \in [h]$ going from $1$ to $h$, let $G_i$ be the induced subgraph of~$G_{i-1}$ obtained by applying \cref{lem:sparsification-bounded-comp} with edge subset $F := F_i \cap E(G_{i-1})$.
  We then define $G'$ as $G_h$.
  By the first item of~\cref{lem:sparsification-bounded-comp}, every vertex of $G'$ has at most three incident edges in $F_i \cap E(G')$, hence has degree at most $3h$.
  The second item readily follows from that of~\cref{lem:sparsification-bounded-comp}.
\end{proof}

We show an upper bound on the treewidth of graphs excluding a~grid as an induced minor and admitting edge-colorings with few colors and moderately large clustering.

\begin{lemma}\label{lem:edge_clustered_coloring_gives_small_tw}
   Every graph $G$ excluding a~$k$-vertex planar graph as an induced minor and admitting an $h$-edge-coloring with clustering $c > 0$ has treewidth at most $ k^{O(1)} 2^{O(h^5 + h \log c)}$.
\end{lemma}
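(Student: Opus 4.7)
The plan is to combine Lemma~\ref{lem:sparsification-bounded-clustered-col} with Theorem~\ref{thm:korhonen} in a~single application, then absorb the resulting logarithmic term.

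First, I would apply \cref{lem:sparsification-bounded-clustered-col} to $G$ together with its given $h$-edge-coloring of clustering~$c$. This yields an induced subgraph $G'$ of $G$ satisfying $\Delta(G') \leqslant 3h$ and
\[
\tw(G') \;\geqslant\; \frac{\tw(G)}{\bigl(c\,\log^\delta \tw(G)\bigr)^h}.
\]
Since $G'$ is an~induced subgraph of $G$, it inherits the property of excluding the fixed $k$-vertex planar graph as an induced minor. Hence \cref{thm:korhonen} applies to $G'$, giving
\[
\tw(G') \;\leqslant\; k^{\gamma}\, 2^{\Delta(G')^5} \;\leqslant\; k^{\gamma}\, 2^{(3h)^5}.
\]
Crucially, the dependence in $\Delta(G')$ is now controlled by~$h$ and not by $\Delta(G)$.

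Combining the two inequalities yields
\[
\tw(G) \;\leqslant\; k^{\gamma}\, 2^{(3h)^5} \cdot c^{h}\cdot \log^{\delta h}\!\tw(G).
\]
The second step is to absorb the $\log^{\delta h}\tw(G)$ factor. Writing $T := \tw(G)$ and $A := k^{\gamma}\, 2^{(3h)^5} c^{h}$, the above reads $T \leqslant A \cdot \log^{\delta h} T$. Using the standard fact that for every $\varepsilon > 0$ there is a~constant $C_\varepsilon$ with $\log^{\delta h} T \leqslant C_\varepsilon \cdot T^{\varepsilon h}$ whenever $T \geqslant 2$, we obtain $T^{1 - \varepsilon h} \leqslant C_\varepsilon \cdot A$, and hence $T \leqslant (C_\varepsilon A)^{1/(1 - \varepsilon h)}$ provided $\varepsilon h < 1$. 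Choosing $\varepsilon = 1/(2h)$ (so that $1/(1-\varepsilon h) = 2$) gives
\[
\tw(G) \;\leqslant\; O(1) \cdot A^{2} \;=\; k^{O(1)}\, 2^{O(h^5)}\, c^{O(h)} \;=\; k^{O(1)}\, 2^{O(h^5 + h \log c)},
\]
as required.

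The only mildly delicate point is the logarithm-absorption step; everything else is a direct plug-in of the two earlier results. I do not foresee a~genuine obstacle: the strategy is precisely the ``contract--uncontract then apply Korhonen'' scheme sketched in the introduction, specialized here to one clustered edge-coloring rather than a~layered construction.
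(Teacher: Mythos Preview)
Your proof is correct and follows essentially the same route as the paper: apply \cref{lem:sparsification-bounded-clustered-col}, then \cref{thm:korhonen} to the resulting bounded-degree induced subgraph, and finally absorb the $\log^{\delta h}\tw(G)$ factor (the paper does this via the case split $\log^{\delta h}\tw(G) \lessgtr \sqrt{\tw(G)}$ rather than your $\varepsilon$-trick). One small caveat: with $\varepsilon = 1/(2h)$ your constant $C_\varepsilon$ is not $O(1)$ but of order $(2\delta h/e)^{\delta h} = 2^{O(h\log h)}$, so ``$O(1)\cdot A^2$'' should read ``$2^{O(h\log h)}\cdot A^2$''---this is harmlessly absorbed into the $2^{O(h^5)}$ term, so the conclusion stands.
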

\begin{proof}
    By~\cref{lem:sparsification-bounded-clustered-col}, $G$ admits an induced subgraph $G'$ of maximum degree at~most $3h$ and
  $$\tw(G)/(c \log^\delta \tw(G))^h \leqslant \tw(G'),$$ with $\delta$ the constant of~\cref{thm:degree-3-sparsifier}.

  As $G$ excludes a~$k$-vertex planar graph as an induced minor, so does $G'$.
  Thus by~\cref{thm:korhonen}, $$\tw(G') \leqslant k^\gamma 2^{\Delta(G')^5} \leq k^\gamma 2^{243 h^5},$$
  for some universal constant $\gamma$.

  From the two previous inequalities, we get that $$\tw(G)/(c \log^\delta \tw(G))^h \leqslant k^\gamma 2^{243 h^5}.$$
  If $\log^{\delta h} \tw(G) \leqslant \sqrt{\tw(G)}$, we get that
  $$\tw(G) \leqslant c^{2h} k^{2\gamma} 2^{2 \cdot 243 h^5} = k^{O(1)} 2^{O(h^5+h \log c)},$$
  as claimed.
  If instead $\log^{\delta h} \tw(G) > \sqrt{\tw(G)}$, the statement of the lemma also holds, as then $\tw(G) = 2^{O(h^2)}$.
\end{proof}

\section{Clusters of bounded treewidth}\label{sec:bd-tw-clustering}

The goal of this section is to relax the notion of \emph{clustering} of edge-colorings so that \cref{lem:edge_clustered_coloring_gives_small_tw} still holds.
Namely, we now allow clusters to be arbitrarily large, however, we want their treewidth to be bounded.
This can be converted into an edge-coloring (still with few colors) with bounded clustering.
Indeed, we show that a~graph $G$ of bounded treewidth admits a~$3$-edge-coloring with clustering $f(\Delta(G))$.

The main tool we plan to use is the notion of \emph{tree-partitions} of graphs.
A~pair $(T,\{B_x : x \in V(T)\})$ is a~tree-partition of a~graph $G$ if $T$ is a~tree, and $\{B_x : x \in V(T)\}$ is a~partition of $V(G)$ such that for every $uv \in E(G)$, there exist a~pair $x,y \in V(T)$ of equal or adjacent vertices such that $u \in B_x,v \in B_y$.
The \emph{width} of a~tree-partition $(T,\{B_x : x \in V(T)\})$ is defined as the maximum cardinality of an element of $\{B_x : x \in V(T)\}$.
The \emph{tree-partition width} of a~graph $G$, denoted $\tpw(G)$, is the minimum width of a~tree-partition of $G$.
An anonymous referee of \cite{DO95} showed that every graph $G$ has tree-partition width of at most $24\tw(G)\Delta(G)$ (see also \cite{W09,DW23}).

\begin{lemma}\label{lem:clustered_edge_coloring_for_bounded_tw}
    Every graph $G$ admits a~$3$-edge-coloring with clustering $\tpw(G)(\Delta(G)+1)$, which is in particular $O(\tw(G)\Delta(G)^2)$.
\end{lemma}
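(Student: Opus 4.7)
The plan is to use a tree-partition of $G$ to directly construct a $3$-edge-coloring. Fix a tree-partition $(T,\{B_x : x \in V(T)\})$ of $G$ of width $\tpw(G)$, and root $T$ arbitrarily so that every non-root vertex has a well-defined parent and depth. I partition $E(G)$ into three classes as follows. Color~$1$ receives all \emph{intra-bag} edges, i.e.\ those $uv$ with $u,v \in B_x$ for some $x$. The \emph{inter-bag} edges are split into colors $2$ and $3$ according to the corresponding tree edge: for an edge $uv$ of $G$ with $u \in B_x$, $v \in B_y$, and $xy \in E(T)$, let $x$ be the deeper endpoint; assign color $2$ if the depth of $x$ is even, and color $3$ if it is odd.

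The next step is to analyze the monochromatic connected components of this coloring. A color-$1$ component lies inside a single bag, so it has at most $\tpw(G)$ vertices, well within the target. For the other two colors, I first describe what the monochromatic subgraph of the tree $T$ looks like. By the parity rule, a vertex $y$ of even depth has its parent edge in color $2$ and all edges to its (odd-depth) children in color $3$; conversely, a vertex $y$ of odd depth has its parent edge in color $3$ and all edges to its children in color $2$. Hence the color-$2$ subgraph of $T$ is a disjoint union of stars centered at odd-depth vertices (with their children as leaves), and symmetrically for color $3$.

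It remains to bound the size of the $G$-component lifted from such a star. Let $C$ be a color-$2$ component of $G$, corresponding to a star in $T$ with center $y$ (odd depth) and leaves $x_1,\dots,x_k$. Every vertex of $C$ lies in $B_y$ or in some $B_{x_i}$. Vertices of $C$ in $B_y$ contribute at most $|B_y| \leq \tpw(G)$. Every vertex of $C$ in $\bigcup_i B_{x_i}$ must be an endpoint of some color-$2$ edge, and all such edges are incident to $B_y$; so their number is at most the total number of $G$-edges leaving $B_y$, which is at most $\sum_{v \in B_y} \deg_G(v) \leq |B_y|\,\Delta(G) \leq \tpw(G)\,\Delta(G)$. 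Therefore
\[
   |V(C)| \leq |B_y| + |B_y|\,\Delta(G) \leq \tpw(G)\,(\Delta(G)+1),
\]
and the same bound holds for color $3$ by symmetry. Combining the three colors yields clustering at most $\tpw(G)(\Delta(G)+1)$. The ``in particular'' clause then follows immediately from the bound $\tpw(G) = O(\tw(G)\,\Delta(G))$ cited just before the lemma.

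I do not anticipate a real obstacle: the only slightly delicate point is to notice that vertices of $C$ outside $B_y$ are counted by the number of edges out of $B_y$ rather than by the sizes of the leaf-bags, which is what prevents an extraneous factor of $\tpw(G)$ from appearing in the final bound.
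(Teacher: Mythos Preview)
Your proof is correct and follows essentially the same approach as the paper: a tree-partition, intra-bag edges as one color, inter-bag edges split by the parity of the depth along the tree edge, and the observation that each non-intra monochromatic component is contained in $N_G[B_y]$ for a single bag $B_y$. The only cosmetic differences are that you key the parity on the deeper endpoint rather than the shallower one (yielding the same partition into two classes) and that you spell out the star structure of the monochromatic subgraph of $T$, whereas the paper jumps directly to the bound $|B_y|(\Delta(G)+1)$.
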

\begin{proof}
    Let $(T,\{B_x : x \in V(T)\})$ be a~tree-partition of $G$ of width $w$.
    We root $T$ at an arbitrary vertex $r$.
    Assign to each vertex $x \in V(T)$ its distance to $r$ in~$T$, denoted $\text{depth}(x)$.
    We define a~coloring $\text{col} : E(G) \rightarrow \{0,1,2\}$ as follows.
    Let $uv \in E(G)$, and let $x \in V(T), y \in N_T[x]$ be such that $u \in B_x$ and $v \in B_y$.
    Without loss of generality assume that $\text{depth}(x) \leq \text{depth}(y)$.
    If $x \neq y$, then we set $\text{col}(uv) = \text{depth}(x) \text{ mod } 2$, and otherwise, we set $\text{col}(uv) = 2$.
    Every monochromatic connected component of color~$2$ is contained in a~single part $B_x$ for some $x \in V(T)$, and so, its cardinality is at most $w$.
    On the other hand, for every monochromatic connected component of color $0$ or $1$, there exists a~single part $B_x$ for some $x \in V(T)$ such that every edge in the component is incident to a~vertex in $B_x$.
    It follows that the size of this monochromatic component is at most $w \cdot (\Delta(G)+1)$.
\end{proof}

Now, we state and prove a~relaxed version of \cref{lem:edge_clustered_coloring_gives_small_tw}.

\begin{lemma}\label{lem:edge_clustered_tw_gives_small_tw}
  Suppose graph $G$ excludes as an induced minor a~$k$-vertex planar graph and admits an edge-coloring $\text{col}_1$ with color classes $F_1,\dots,F_h$ such that for each $i \in [h]$, the graph $(V(G),F_i)$ has treewidth at most $t$.
  Then the treewidth of $G$ is at most $k^{O(1)} t^{O(h^5)}\Delta(G)^{O(h^5)}$.
\end{lemma}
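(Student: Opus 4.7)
The plan is to reduce this statement directly to \cref{lem:edge_clustered_coloring_gives_small_tw} by converting the given edge-coloring (with bounded-treewidth color classes) into an edge-coloring with bounded clustering. The key observation is that \cref{lem:clustered_edge_coloring_for_bounded_tw} already gives exactly the tool needed: any bounded-treewidth, bounded-degree graph admits a $3$-edge-coloring with small clustering. Refining each $F_i$ in this way and concatenating the results yields a $3h$-edge-coloring of $G$ with bounded clustering, at which point \cref{lem:edge_clustered_coloring_gives_small_tw} applies.

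First, I would dispose of the trivial cases: if $\Delta(G) \leq 1$ then $\tw(G) \leq 1$, and if $t = 0$ then $G$ is edgeless. Assume therefore $t, \Delta(G) \geq 2$. For each $i \in [h]$, note that the graph $(V(G), F_i)$ has treewidth at most $t$ and maximum degree at most $\Delta(G)$. Applying \cref{lem:clustered_edge_coloring_for_bounded_tw} to $(V(G), F_i)$ produces a $3$-edge-coloring of $F_i$ with clustering $O(t \Delta(G)^2)$.

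Next, I would amalgamate these refinements into a single edge-coloring of $G$ using $3h$ colors, indexed by pairs $(i, j)$ with $i \in [h]$ and $j \in \{0,1,2\}$: an edge $e \in F_i$ receives the color $(i, j)$ where $j$ is its color in the refinement of $F_i$. Any monochromatic connected component of this combined coloring is a monochromatic connected component in one of the refined colorings of some $F_i$, so the overall clustering is $c := O(t \Delta(G)^2)$.

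Finally, I would invoke \cref{lem:edge_clustered_coloring_gives_small_tw} with parameters $3h$ and $c$, obtaining
\[
  \tw(G) \leq k^{O(1)} 2^{O((3h)^5 + 3h \log c)} = k^{O(1)} 2^{O(h^5)} (t \Delta(G)^2)^{O(h)},
\]
which, using $t, \Delta(G) \geq 2$, is bounded by $k^{O(1)} t^{O(h^5)} \Delta(G)^{O(h^5)}$ as required. There is no real obstacle here: the whole statement is essentially a one-line consequence of \cref{lem:clustered_edge_coloring_for_bounded_tw} and \cref{lem:edge_clustered_coloring_gives_small_tw} stacked together, the sole point being that bounded-treewidth, bounded-degree color classes can be cheaply sub-partitioned into clusters of size polynomial in $t$ and $\Delta(G)$ at the cost of only a factor-$3$ blow-up in the number of colors.
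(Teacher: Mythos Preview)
Your proof is correct and follows essentially the same approach as the paper: refine each $F_i$ via \cref{lem:clustered_edge_coloring_for_bounded_tw} into three subclasses with clustering $O(t\Delta(G)^2)$, obtaining a $3h$-edge-coloring with bounded clustering (the paper phrases this as taking the product of $\text{col}_1$ with the resulting $3$-coloring $\text{col}_2$, which is exactly your indexing by pairs $(i,j)$), and then apply \cref{lem:edge_clustered_coloring_gives_small_tw}. Your handling of the trivial cases and the final absorption of the $2^{O(h^5)}$ term into $t^{O(h^5)}\Delta(G)^{O(h^5)}$ via $t,\Delta(G)\ge 2$ is slightly more explicit than the paper's write-up, but the argument is the same.
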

\begin{proof}
    By \cref{lem:clustered_edge_coloring_for_bounded_tw}, for each $i \in [h]$, the graph $(V(G),F_i)$ admits a~$3$-edge-coloring with clustering $O(t\Delta(G)^2)$.
    Since $\{F_1,\dots,F_h\}$ is a~partition of $V(G)$, the above edge-colorings give a~$3$-edge-coloring $\text{col}_2$ of $E(G)$.
    Consider the product edge-coloring $\text{col}$ of $\text{col}_1$ and $\text{col}_2$ of $E(G)$, that is, $\text{col}(e)=(\text{col}_1(e),\text{col}_2(e))$ for every $e \in E(G)$.
    Observe that $\text{col}$ uses at most $3h$ colors and has clustering $O(t\Delta(G)^2)$.
    Finally, by \cref{lem:edge_clustered_coloring_gives_small_tw}, we obtain 
    $$\tw(G) \leq k^{O(1)} \cdot (t\Delta(G)^2)^{O(h^5)} = k^{O(1)} t^{O(h^5)}\Delta(G)^{O(h^5)},$$
    as claimed.
\end{proof}

\section{Product structure}\label{sec:product-structure}

The \emph{strong product of graphs $H_1$ and $H_2$}, denoted by $H_1 \boxtimes H_2$, is the graph with vertex set $V(H_1) \times V(H_2)$ such that there is an edge $(u,v)(u',v')$ whenever either $u = u'$ and $vv' \in E(H_2)$, or $uu' \in E(H_1)$ and $v = v'$, or $uu' \in E(H_1)$ and $vv' \in E(H_2)$.
We prove the following theorem.

\begin{reptheorem}{thm:product_structure}
    Let $H$ be a~graph of treewidth at most~$t$, and $P$ be a~path.
  Let $G$ be a~subgraph of $H \boxtimes P$ excluding a~$k$-vertex planar graph as an induced minor.
  Then the treewidth of $G$ is at~most $k^{O(1)} \cdot t^{O(1)} \cdot \Delta(G)^{O(1)}$.
\end{reptheorem}
\begin{proof}
    We claim that $H \boxtimes P$ admits a~$3$-edge-coloring such that if $F$ is any of its color classes, then the graph $(V(H \boxtimes P),F)$ has treewidth at most $2t$.
    First, note that this suffices to prove the theorem.
    Indeed, we can restrict this edge-coloring to $G$ and apply \cref{lem:edge_clustered_tw_gives_small_tw} with $h=3$ to end the proof.

    Let us justify the initial claim.
    We construct a~coloring $\text{col} : E(H \boxtimes P) \rightarrow \{0,1,2\}$.
    Let $P = v_1v_2 \dots v_m$.
    We set the color of each edge $(u,v)(u',v')$ such that $v = v'$ to~$2$, and each edge $(u,v)(u',v')$ such that $v \neq v'$ to $i \text{ mod } 2$, where $i$ the~positive integer satisfying $\{v,v'\} = \{v_i,v_{i+1}\}$.
    The graph $G$ restricted to edges of color $2$ is simply a~disjoint union of copies of $H$, hence, it has treewidth at most $t$.
    On the other hand, the graph $G$ restricted to edges of color $0$ or $1$ is a~disjoint union of copies of the graph $H' = H \boxtimes K_2$.
    %, where $H'$ is obtained from $H$ by adding a~true twin (i.e., an adjacent vertex with the same neighborhood) to each vertex.
    Thus $\tw(H') \leq 2\tw(H) \leq 2t$, which ends the proof.
\end{proof}

%\bibliography{main}

\end{document}